\numberwithin{equation}{section} \setlength{\textwidth}{16cm}
\newtheorem{theorem}{Theorem}[section]
\newtheorem{corollary}[theorem]{Corollary}
\newtheorem{lemma}[theorem]{Lemma}
\theoremstyle{definition}
\newtheorem{definition}[theorem]{Definition}
\theoremstyle{remark}
\newtheorem{remark}[theorem]{Remark}
\numberwithin{equation}{section}
\begin{document}
\title[bi-univalent functions]{Initial coefficient bounds for a general
class of bi-univalent functions}
\author{H. Orhan$^{\ast }$}
\address{Department of Mathematics, Faculty of Science, Ataturk University,
25240 Erzurum, Turkey}
\email{orhanhalit607@gmail.com; horhan@atauni.edu.tr}
\author{N. Magesh}
\address{Post-Graduate and Research Department of Mathematics, Government
Arts College for Men, Krishnagiri 635001, Tamilnadu, India\\
}
\email{nmagi\_2000@yahoo.co.in}
\author{V.K.Balaji}
\address{Department of Mathematics, L.N. Govt College, Ponneri, Chennai,
Tamilnadu, India}
\email{balajilsp@yahoo.co.in}

\begin{abstract}
Inspired by the recent works of Srivastava et al. \cite{HMS-AKM-PG}, Frasin
and Aouf \cite{BAF-MKA} and others \cite%
{Ali-Ravi-Ma-Mina-class,Caglar-Orhan,Goyal-Goswami,Xu-HMS-AML,Xu-HMS-AMC},
we propose to investigate the coefficient estimates for a general class of
analytic and bi-univalent functions. Also, we obtain estimates on the
coefficients $|a_{2}|$ and $|a_{3}|$ for functions in this new class. Some
interesting remarks, corollaries and applications of the results presented
here are also discussed.\ \newline
\end{abstract}

\subjclass[2000]{ 30C45. \ \\
}
\keywords{Analytic functions, univalent functions, bi-univalent functions,
starlike and convex functions, bi-starlike and bi-convex functions.\\
$^{\ast }$Corresponding author e-mail: orhanhalit607@gmail.com}
\maketitle

%%%=====================

\section{Introduction}

Let $\mathcal{A}$ denote the class of functions of the form 
\begin{equation}
f(z)=z+\sum\limits_{n=2}^{\infty }a_{n}z^{n}  \label{Int-e1}
\end{equation}%
which are analytic in the open unit disk $\mathbb{U}=\{z:z\in \mathbb{C}\,\,%
\mathrm{and}\,\,|z|<1\}.$ Further, by $\mathcal{S}$ we shall denote the
class of all functions in $\mathcal{A}$ which are univalent in $\mathbb{U}.$

For two functions $f$ and $g,$ analytic in $\mathbb{U},$ we say that the
function $f(z)$ is subordinate to $g(z)$ in $\mathbb{U},$ and write 
\begin{equation*}
f(z) \prec g(z), \quad z\in \mathbb{U},
\end{equation*}
if there exists a Schwarz function $w(z),$ analytic in $\mathbb{U},$ with 
\begin{equation*}
w(0)=0 \quad \mathrm{and} \quad |w(z)|<1,\quad z\in \mathbb{U},
\end{equation*}
such that 
\begin{equation*}
f(z)=g(w(z)), \quad z\in\mathbb{U}.
\end{equation*}
In particular, if the function $g$ is univalent in $\mathbb{U},$ the above
subordination is equivalent to 
\begin{equation*}
f(0)=g(0) \quad \mathrm{and} \quad f(\mathbb{U}) \subset g(\mathbb{U}).
\end{equation*}
%%=====================================

Some of the important and well-investigated subclasses of the univalent
function class $\mathcal{S}$ include (for example) the class $\mathcal{S}%
^*(\alpha)$ of starlike functions of order $\alpha$ in $\mathbb{U}$ and the
class $\mathcal{K}(\alpha)$ of convex functions of order $\alpha$ in $%
\mathbb{U}.$ By definition, we have 
\begin{equation}  \label{ST-e}
\mathcal{S}^*(\alpha):= \left \{ f : f \in \mathcal{S} \, \mathrm{and}\, \Re
\left ( \frac{zf^{\prime }(z)}{f(z)}\right ) > \alpha ; \,\, z \in \mathbb{U}%
;\,\,\, 0 \leq \alpha < 1 \right \}
\end{equation}
and 
\begin{equation}  \label{CV-e}
\mathcal{K}(\alpha):= \left \{ f : f \in \mathcal{S} \, \mathrm{and}\, \Re
\left ( 1+\frac{zf^{\prime \prime }(z)}{f^{\prime }(z)}\right ) > \alpha ;
\,\, z \in \mathbb{U};\,\,\, 0 \leq \alpha < 1 \right \}.
\end{equation}

It readily follows from the definitions (\ref{ST-e}) and (\ref{CV-e}) that 
\begin{equation*}
f \in \mathcal{K}(\alpha) \Longleftrightarrow zf^{\prime }\in \mathcal{S}%
^*(\alpha).
\end{equation*}
Also, let $\mathcal{S}_{\mathcal{P}}^{\beta}(\alpha)$ and $\mathcal{C}_{%
\mathcal{P}}^{\beta}(\alpha)$ denote the subclasses of $\mathcal{S}$
consisting functions $f(z)$ which are defined, respectively by 
\begin{equation}  \label{SP-ST-e}
\mathcal{S}_{\mathcal{P}}^{\beta}(\alpha):= \left \{ f : f \in \mathcal{S}
\, \mathrm{and}\, \Re \left ( e^{i\beta}\frac{zf^{\prime }(z)}{f(z)}\right )
> \alpha\cos\beta ; \,\, z \in \mathbb{U};\,\,\, 0 \leq \alpha < 1,\,\,
\beta \in (-\frac{\pi}{2},\frac{\pi}{2}) \right \}
\end{equation}
and 
\begin{equation}  \label{SP-CV-e}
\mathcal{C}_{\mathcal{P}}^{\beta}(\alpha):= \left \{ f : f \in \mathcal{S}
\, \mathrm{and}\, \Re \left ( e^{i\beta}\frac{z(f^{\prime }(z))^{\prime }}{%
f^{\prime }(z)}\right ) > \alpha\cos\beta ; \,\, z \in \mathbb{U};\,\,\, 0
\leq \alpha < 1,\,\, \beta \in (-\frac{\pi}{2},\frac{\pi}{2}) \right \}.
\end{equation}
It is easy to see that 
\begin{equation*}
f \in \mathcal{C}_{\mathcal{P}}^{\beta}(\alpha) \Longleftrightarrow
zf^{\prime }\in \mathcal{S}_{\mathcal{P}}^{\beta}(\alpha).
\end{equation*}
%%=====================================

It is well known that every function $f\in \mathcal{S}$ has an inverse $%
f^{-1},$ defined by 
\begin{equation*}
f^{-1}(f(z))=z,\,\, z \in \mathbb{U}
\end{equation*}
and 
\begin{equation*}
f(f^{-1}(w))=w, \,\, |w| < r_0(f);\,\, r_0(f) \geq \frac{1}{4},
\end{equation*}
where 
\begin{equation}  \label{Int-f-inver}
f^{-1}(w) = w - a_2w^2 + (2a_2^2-a_3)w^3 - (5a_2^3-5a_2a_3+a_4)w^4+\ldots .
\end{equation}

A function $f \in \mathcal{A}$ is said to be bi-univalent in $\mathbb{U}$ if
both $f(z)$ and $f^{-1}(z)$ are univalent in $\mathbb{U}.$ Let $\Sigma$
denote the class of bi-univalent functions in $\mathbb{U}$ given by (\ref%
{Int-e1}). Examples of functions in the class $\Sigma$ are 
\begin{equation*}
\frac{z}{1-z},\,\,\, -\log (1-z),\,\,\, \frac{1}{2}\log \left (\frac{1+z}{1-z%
} \right )
\end{equation*}
and so on. However, the familiar Koebe function is not a member of $\Sigma.$
Other common examples of functions in $\mathcal{S}$ such as 
\begin{equation*}
z-\frac{z^2}{2}\,\,\, \mathrm{and} \,\,\, \frac{z}{1-z^2}
\end{equation*}
are also not members of $\Sigma$ (see \cite{BAF-MKA,HMS-AKM-PG}).

In 1967, Lewin \cite{Lewin} investigated the bi-univalent function class $%
\Sigma$ and showed that $|a_2|<1.51.$ On the other hand, Brannan and Clunie 
\cite{Bran-1979} (see also \cite{Branna1970,Bran1985,Taha1981}) and
Netanyahu \cite{Netany} made an attempt to introduce various subclasses of
the bi-univalent function class $\Sigma$ and obtained non-sharp coefficient
estimates on the first two coefficients $|a_2|$ and $|a_3|$ of (\ref{Int-e1}%
). But the coefficient problem for each of the following Taylor-Maclaurin
coefficients $|a_n|\, (n\in\mathbb{N}\setminus\{1,2\};\;\;\mathbb{N}%
:=\{1,2,3,\cdots\})$ is still an open problem. Following Brannan and Taha 
\cite{Bran1985}, many researchers (see\cite%
{Ali-Ravi-Ma-Mina-class,Caglar-Orhan,BAF-MKA,Goyal-Goswami,haya,Li-Wang,SSS-VR-VR,HMS-AKM-PG,Xu-HMS-AML,Xu-HMS-AMC}%
) have recently introduced and investigated several interesting subclasses
of the bi-univalent function class $\Sigma$ and they have found non-sharp
estimates on the first two Taylor-Maclaurin coefficients $|a_2|$ and $|a_3|.$

Motivated by the above mentioned works, we define the following subclass of
function class $\Sigma.$

\begin{definition}
Let $h:\mathbb{U}\rightarrow \mathbb{C},$ be a convex univalent function
such that $h(0)=1$ and $h(\bar{z})=\overline{h(z)},$ for $z\in \mathbb{U}$
and $\Re(h(z))>0.$ A function $f(z)$ given by (\ref{Int-e1}) is said to be
in the class $\mathcal{N}\mathcal{P}^{\mu, \lambda}_{\Sigma}(\beta,h)$ if
the following conditions are satisfied: 
\begin{equation}  \label{Defi-2-e1}
f \in \Sigma, \,\, e^{i\beta}\left ( (1-\lambda)\left(\frac{f(z)}{z}%
\right)^{\mu}+\lambda f^{\prime }(z)\left(\frac{f(z)}{z}\right)^{\mu-1}
\right )\prec h(z) \cos\beta+i\sin\beta,\,\, \,\, z \in \mathbb{U}
\end{equation}
and 
\begin{equation}  \label{Defi-2-e2}
e^{i\beta}\left ( (1-\lambda)\left(\frac{g(w)}{w}\right)^{\mu}+\lambda
g^{\prime }(w)\left(\frac{g(w)}{w}\right)^{\mu-1} \right )\prec h(w)
\cos\beta+i\sin\beta,\,\, \,\, w\in \mathbb{U},
\end{equation}
where $\beta\in(-\pi/2,\pi/2),$ $\lambda \geq 1,$ $\mu\geq 0$ and the
function $g$ is given by 
\begin{equation}  \label{g-e}
g(w) = w - a_2w^2 + (2a_2^2-a_3)w^3 - (5a_2^3-5a_2a_3+a_4)w^4+\ldots
\end{equation}
the extension of $f^{-1}$ to $\mathbb{U}.$
\end{definition}

\begin{remark}
\label{Bi-R-1} If we set $h(z)=\frac{1+Az}{1+Bz},$ $-1\leq B <A\leq 1,$ in
the class $\mathcal{N}\mathcal{P}^{\mu, \lambda}_{\Sigma}(\beta,h),$ we have 
$\mathcal{N}\mathcal{P}^{\mu, \lambda}_{\Sigma}(\beta,\frac{1+Az}{1+Bz})$
and defined as 
\begin{equation*}
f \in \Sigma, \,\, e^{i\beta}\left ( (1-\lambda)\left(\frac{f(z)}{z}%
\right)^{\mu}+\lambda f^{\prime }(z)\left(\frac{f(z)}{z}\right)^{\mu-1}
\right )\prec \frac{1+Az}{1+Bz} \cos\beta+i\sin\beta,\,\, \,\, z \in \mathbb{%
U}
\end{equation*}
and 
\begin{equation*}
e^{i\beta}\left ( (1-\lambda)\left(\frac{g(w)}{w}\right)^{\mu}+\lambda
g^{\prime }(w)\left(\frac{g(w)}{w}\right)^{\mu-1} \right )\prec \frac{1+Aw}{%
1+Bw} \cos\beta+i\sin\beta,\,\, \,\, w\in \mathbb{U},
\end{equation*}
where $\beta\in(-\pi/2,\pi/2),$ $\lambda \geq 1,$ $\mu\geq 0$ and the
function $g$ is given by (\ref{g-e}).
\end{remark}

\begin{remark}
\label{Bi-R-5} Taking $h(z)=\frac{1+(1-2\alpha)z}{1-z},$ $0\leq\alpha<1$ in
the class $\mathcal{N}\mathcal{P}^{\mu, \lambda}_{\Sigma}(\beta,h),$ we have 
$\mathcal{N}\mathcal{P}^{\mu,\lambda}_{\Sigma}(\beta,\alpha)$ and $f\in%
\mathcal{N}\mathcal{P}^{\mu,\lambda}_{\Sigma}(\beta, \alpha)$ if the
following conditions are satisfied: 
\begin{equation*}
f \in \Sigma, \,\, \Re \left (e^{i\beta}\left ( (1-\lambda)\left(\frac{f(z)}{%
z}\right)^{\mu}+\lambda f^{\prime }(z)\left(\frac{f(z)}{z}\right)^{\mu-1}
\right )\right ) > \alpha\cos\beta,\,\, 0 \leq \alpha < 1;\,\, z \in \mathbb{%
U}
\end{equation*}
and 
\begin{equation*}
\Re \left (e^{i\beta}\left ( (1-\lambda)\left(\frac{g(w)}{w}%
\right)^{\mu}+\lambda g^{\prime }(w)\left(\frac{g(w)}{w}\right)^{\mu-1}
\right )\right )>\alpha\cos\beta,\,\, 0 \leq \alpha < 1;\,\, w \in \mathbb{U}%
,
\end{equation*}
where $\beta\in(-\pi/2,\pi/2),$ $\lambda \geq 1,$ $\mu\geq 0$ and the
function $g$ is given by (\ref{g-e}).
\end{remark}

\begin{remark}
\label{Bi-R-6} Taking $\lambda=1$ and $h(z)=\frac{1+(1-2\alpha)z}{1-z},$ $%
0\leq\alpha<1$ in the class $\mathcal{N}\mathcal{P}^{\mu,
\lambda}_{\Sigma}(\beta,h),$ we have $\mathcal{N}\mathcal{P}%
^{\mu,1}_{\Sigma}(\beta,\alpha)$ and $f\in\mathcal{N}\mathcal{P}%
^{\mu,1}_{\Sigma}(\beta, \alpha)$ if the following conditions are satisfied: 
\begin{equation*}
f \in \Sigma, \,\, \Re \left (e^{i\beta}f^{\prime }(z)\left (\frac{f(z)}{z}%
\right )^{\mu-1} \right )> \alpha\cos\beta,\,\, 0 \leq \alpha <
1;\,\,\mu\geq 0;\,\, z \in \mathbb{U}
\end{equation*}
and 
\begin{equation*}
\Re \left (e^{i\beta}g^{\prime }(w)\left (\frac{g(w)}{w}\right )^{\mu-1}
\right )>\alpha\cos\beta,\,\, 0 \leq \alpha < 1;\,\,\mu\geq 0;\,\, w \in 
\mathbb{U},
\end{equation*}
where $\beta\in(-\pi/2,\pi/2)$ and the function $g$ is given by (\ref{g-e}).
\end{remark}

\begin{remark}
\label{Bi-R-7} Taking $\mu+1=\lambda=1$ and $h(z)=\frac{1+(1-2\alpha)z}{1-z}%
, $ $0\leq\alpha<1$ in the class $\mathcal{N}\mathcal{P}^{\mu,
\lambda}_{\Sigma}(\beta,h),$ we have $\mathcal{N}\mathcal{P}%
^{0,1}_{\Sigma}(\beta,\alpha)$ and $f\in\mathcal{N}\mathcal{P}%
^{0,1}_{\Sigma}(\beta, \alpha)$ if the following conditions are satisfied: 
\begin{equation*}
f \in \Sigma, \,\, \Re \left (e^{i\beta}\frac{zf^{\prime }(z)}{f(z)}\right )
> \alpha\cos\beta,\,\, 0 \leq \alpha < 1;\,\, z \in \mathbb{U}
\end{equation*}
and 
\begin{equation*}
\Re \left (e^{i\beta}\frac{wg^{\prime }(w)}{g(w)}\right
)>\alpha\cos\beta,\,\, 0 \leq \alpha < 1;\,\, w \in \mathbb{U},
\end{equation*}
where $\beta\in(-\pi/2,\pi/2)$ and the function $g$ is given by (\ref{g-e}).
\end{remark}

\begin{remark}
\label{Bi-R-8} Taking $\mu=1$ and $h(z)=\frac{1+(1-2\alpha)z}{1-z},$ $%
0\leq\alpha<1$ in the class $\mathcal{N}\mathcal{P}^{\mu,
\lambda}_{\Sigma}(\beta,h),$ we have $\mathcal{N}\mathcal{P}%
^{1,\lambda}_{\Sigma}(\beta,\alpha)$ and $f\in\mathcal{N}\mathcal{P}%
^{1,\lambda}_{\Sigma}(\beta, \alpha)$ if the following conditions are
satisfied: 
\begin{equation*}
f \in \Sigma, \,\, \Re \left (e^{i\beta}\left ( (1-\lambda)\frac{f(z)}{z}%
+\lambda f^{\prime }(z)\right )\right ) > \alpha\cos\beta,\,\, 0 \leq \alpha
< 1;\,\,\lambda \geq 1;\,\, z \in \mathbb{U}
\end{equation*}
and 
\begin{equation*}
\Re \left (e^{i\beta}\left ( (1-\lambda)\frac{g(w)}{w}+\lambda g^{\prime
}(w)\right )\right)>\alpha\cos\beta,\,\, 0 \leq \alpha < 1;\,\,\lambda \geq
1;\,\, w \in \mathbb{U},
\end{equation*}
where $\beta\in(-\pi/2,\pi/2)$ and the function $g$ is given by (\ref{g-e}).
\end{remark}

\begin{remark}
\label{Bi-R-8a} Taking $\mu=\lambda=1$ and $h(z)=\frac{1+(1-2\alpha)z}{1-z},$
$0\leq\alpha<1$ in the class $\mathcal{N}\mathcal{P}^{\mu,
\lambda}_{\Sigma}(\beta,h),$ we have $\mathcal{N}\mathcal{P}%
^{1,1}_{\Sigma}(\beta,\alpha)$ and $f\in\mathcal{N}\mathcal{P}%
^{1,1}_{\Sigma}(\beta, \alpha)$ if the following conditions are satisfied: 
\begin{equation*}
f \in \Sigma, \,\, \Re \left (e^{i\beta}f^{\prime }(z)\right ) >
\alpha\cos\beta,\,\, 0 \leq \alpha < 1;\,\, z \in \mathbb{U}
\end{equation*}
and 
\begin{equation*}
\Re \left (e^{i\beta}g^{\prime }(w)\right)>\alpha\cos\beta,\,\, 0 \leq
\alpha < 1;\,\, w \in \mathbb{U},
\end{equation*}
where $\beta\in(-\pi/2,\pi/2)$ and the function $g$ is given by (\ref{g-e}).
\end{remark}

We note that

\begin{enumerate}
\item $\mathcal{N}\mathcal{P}^{1,1}_{\Sigma}(0,\alpha)$ = $\mathcal{H}%
_{\Sigma}^{\alpha}$ \,\,(see \cite{HMS-AKM-PG})

\item $\mathcal{N}\mathcal{P}^{1,\lambda}_{\Sigma}(0,\alpha)$ = $\mathcal{B}%
_{\Sigma}(\alpha,\lambda)$\,\,(see \cite{BAF-MKA})

\item $\mathcal{N}\mathcal{P}^{0,1}_{\Sigma}(0,\alpha)$ = $\mathcal{F}%
_{\Sigma}(\alpha)$ \,\, (see \cite{Li-Wang})

\item $\mathcal{N}\mathcal{P}^{\mu,1}_{\Sigma}(0,\alpha)$ = $\mathcal{N}%
^{\mu}_{\Sigma}(\alpha)$ \,\, (see \cite{SSS-VR-VR})

\item $\mathcal{N}\mathcal{P}^{\mu,\lambda}_{\Sigma}(0,\alpha)$ = $\mathcal{N%
}^{\mu,\lambda}_{\Sigma}(\alpha)$ \,\, (see \cite{Caglar-Orhan}).
\end{enumerate}

In order to derive our main result, we have to recall here the following
lemma.

\begin{lemma}
\label{lem-pom-2}\cite{Rogosiniki,Xu-HMS-AML-11} Let the function $%
\varphi(z) $ given by 
\begin{equation*}
\varphi(z)=\sum\limits_{n=1}^{\infty}B_nz^n,\quad z\in\mathbb{U}
\end{equation*}
be convex in $\mathbb{U}.$ Suppose also that the function $h(z)$ given by 
\begin{equation*}
h(z)=\sum\limits_{n=1}^{\infty}h_nz^n,\quad z\in\mathbb{U}
\end{equation*}
is holomorphic in $\mathbb{U}.$ If $h(z)\prec \varphi(z),$ $z \in \mathbb{U}%
, $ then $|h_n| \leq |B_1|,$ $n\in \mathbb{N}=\{1,2,3,\dots\}.$
\end{lemma}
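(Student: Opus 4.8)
The plan is to reduce the estimate $|h_n|\le|B_1|$ to the classical Carath\'eodory coefficient bound, with the convexity of $\varphi$ entering only through the integral representation of convex univalent functions.

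First I would normalize. Since $\varphi$ is univalent, $B_1=\varphi'(0)\neq 0$; replacing $\varphi$ by $\varphi/B_1$ and $h$ by $h/B_1$ preserves the subordination $h\prec\varphi$ and reduces matters to showing $|h_n|\le 1$ when $\varphi$ is convex with $\varphi(0)=0$, $\varphi'(0)=1$. Writing $h(z)=\varphi(\omega(z))$ for a Schwarz function $\omega$, I would then invoke the classical representation of such $\varphi$: there is a probability measure $\mu$ on $\{|x|=1\}$ with $\varphi(z)=\int_{|x|=1}\frac{z}{1-xz}\,d\mu(x)$. Substituting $z\mapsto\omega(z)$ and interchanging the integral over $\{|x|=1\}$ with the Cauchy integral extracting the $n$th Taylor coefficient (legitimate because $\omega(z)/(1-x\omega(z))$ is uniformly bounded on each circle $|z|=\rho<1$, since $|\omega(z)|\le|z|$), I obtain $h_n=\int_{|x|=1}c_n(x)\,d\mu(x)$, where $c_n(x)$ denotes the $n$th Taylor coefficient of $z\mapsto \omega(z)/(1-x\omega(z))$.

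The core of the argument is the uniform bound $|c_n(x)|\le 1$ for each $x$ with $|x|=1$. For this I would use the identity $\frac{1+x\omega(z)}{1-x\omega(z)}=1+2x\,\frac{\omega(z)}{1-x\omega(z)}=1+2\sum_{k\ge 1}x\,c_k(x)\,z^k$ together with the observation that its left-hand side is the right half-plane map $w\mapsto\frac{1+w}{1-w}$ evaluated at the Schwarz function $x\,\omega(z)$; hence it is a Carath\'eodory function, so its $n$th coefficient $2x\,c_n(x)$ obeys $|2x\,c_n(x)|\le 2$, that is $|c_n(x)|\le 1$. Substituting back gives $|h_n|\le\int_{|x|=1}|c_n(x)|\,d\mu(x)\le 1$, and undoing the normalization yields $|h_n|\le|B_1|$.

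The only non-elementary ingredient is the representation $\varphi(z)=\int_{|x|=1}\frac{z}{1-xz}\,d\mu(x)$ for convex univalent $\varphi$, which I regard as the main obstacle to a self-contained proof; it is classical (every normalized convex univalent function lies in the closed convex hull of the maps $z/(1-xz)$, $|x|=1$), and in fact the lemma in precisely this form is proved in the works of Rogosinski and of Xu and Srivastava cited in its statement, so one may alternatively simply appeal to those. Everything else in the argument is routine verification.
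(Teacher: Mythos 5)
The paper does not prove this lemma at all: it is quoted, with references, from Rogosinski and from Xu, Srivastava and Li, so there is no in-text proof to compare yours against step by step. Your argument is correct. The normalization by $B_1=\varphi'(0)\neq 0$ is legitimate (a convex function is univalent, and dividing by a nonzero constant preserves both convexity of the image and the subordination); the interchange of the coefficient functional with the integral over $|x|=1$ is justified exactly as you say by the uniform bound $|\omega(z)|\le |z|$; and the key estimate $|c_n(x)|\le 1$ follows correctly from recognizing $(1+x\omega(z))/(1-x\omega(z))$ as a Carath\'eodory function. Your one imported ingredient, the representation $\varphi(z)=\int_{|x|=1}\frac{z}{1-xz}\,d\mu(x)$ for normalized convex $\varphi$, is indeed classical: it follows from the Marx--Strohh\"acker inequality $\Re(\varphi(z)/z)>\frac12$ combined with the Herglotz representation, so the proof can be made self-contained. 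It is worth noting that this is a genuinely different route from Rogosinski's original argument in the cited source, which is more elementary: there one averages over $n$th roots of unity, setting $g(z)=\frac1n\sum_{j=0}^{n-1}h(e^{2\pi ij/n}z)$, uses convexity of $\varphi(\mathbb{U})$ to get $g\prec\varphi$, notes that $g$ (hence the associated Schwarz function) vanishes to order $n$ at the origin, and concludes $|h_n|\le |B_1|$ from the Schwarz lemma. That proof avoids the convex-hull/Herglotz machinery entirely, while yours trades that machinery for the very transparent Carath\'eodory coefficient bound; both are valid.
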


The object of the present paper is to introduce a general new subclass $%
\mathcal{N}\mathcal{P}^{\mu, \lambda}_{\Sigma}(\beta,h)$ of the function
class $\Sigma$ and obtain estimates of the coefficients $|a_2|$ and $|a_3|$
for functions in this new class $\mathcal{N}\mathcal{P}^{\mu,
\lambda}_{\Sigma}(\beta,h).$

\section{Coefficient bounds for the function class $\mathcal{N}\mathcal{P}^{%
\protect\mu, \protect\lambda}_{\Sigma}(\protect\beta,h)$}

In this section we find the estimates on the coefficients $|a_2|$ and $|a_3|$
for functions in the class $\mathcal{N}\mathcal{P}^{\mu,
\lambda}_{\Sigma}(\beta,h).$

\begin{theorem}
\label{Bi-th2} Let $f(z)$ given by (\ref{Int-e1}) be in the class $\mathcal{N%
}\mathcal{P}^{\mu,\lambda}_{\Sigma}(\beta, h),$ $0 \leq \alpha < 1,$ $%
\lambda \geq 1$ and $\mu geq 0,$ then 
\begin{equation}  \label{bi-th2-b-a2}
|a_2| \leq \sqrt{\frac{2|B_1|\cos\beta}{(1+\mu)(2\lambda+\mu)}}
\end{equation}
and 
\begin{equation}  \label{bi-th2-b-a3}
|a_3| \leq \frac{2|B_1|cos\beta}{(2\lambda+\mu)(1+\mu)},
\end{equation}
where $\beta\in(-\pi/2,\pi/2).$
\end{theorem}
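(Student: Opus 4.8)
The plan is the standard two-subordination argument for bi-univalent classes. Write
\[
p(z)=e^{i\beta}\left((1-\lambda)\left(\frac{f(z)}{z}\right)^{\mu}+\lambda f'(z)\left(\frac{f(z)}{z}\right)^{\mu-1}\right),\qquad
q(w)=e^{i\beta}\left((1-\lambda)\left(\frac{g(w)}{w}\right)^{\mu}+\lambda g'(w)\left(\frac{g(w)}{w}\right)^{\mu-1}\right),
\]
so that membership of $f$ in $\mathcal{N}\mathcal{P}^{\mu,\lambda}_{\Sigma}(\beta,h)$ is precisely $p(z)\prec H(z)$ and $q(w)\prec H(w)$, where $H(z)=h(z)\cos\beta+i\sin\beta$. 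Since $\cos\beta>0$ and $h$ is convex univalent with $h(0)=1$, the function $H$ is again convex univalent and, writing $h(z)=1+B_1z+B_2z^2+\cdots$, we have $H(z)=e^{i\beta}+B_1\cos\beta\,z+B_2\cos\beta\,z^2+\cdots$. Because $p(0)=q(0)=H(0)=e^{i\beta}$, the subordinations descend to $p(z)-e^{i\beta}\prec H(z)-e^{i\beta}$ and $q(w)-e^{i\beta}\prec H(w)-e^{i\beta}$ (with the same Schwarz functions), and $H-e^{i\beta}$ is still convex, being a translate of a convex function, with leading coefficient $B_1\cos\beta$. Hence Lemma~\ref{lem-pom-2} applies and bounds every Taylor coefficient of $p-e^{i\beta}$ and of $q-e^{i\beta}$, in modulus, by $|B_1|\cos\beta$.

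Next I expand. Using $f(z)/z=1+a_2z+a_3z^2+\cdots$ and the binomial series for the $\mu$-th and $(\mu-1)$-st powers, a direct computation gives
\[
p(z)=e^{i\beta}\left(1+(\mu+\lambda)a_2\,z+\left[(\mu+2\lambda)a_3+\frac{(\mu-1)(\mu+2\lambda)}{2}a_2^{2}\right]z^{2}+\cdots\right).
\]
Since $g(w)=w-a_2w^2+(2a_2^2-a_3)w^3-\cdots$, so that $-a_2$ and $2a_2^2-a_3$ take the places of $a_2$ and $a_3$, the identical computation yields
\[
q(w)=e^{i\beta}\left(1-(\mu+\lambda)a_2\,w+\left[\frac{(\mu+3)(\mu+2\lambda)}{2}a_2^{2}-(\mu+2\lambda)a_3\right]w^{2}+\cdots\right).
\]
Comparing Taylor coefficients and invoking Lemma~\ref{lem-pom-2}, the linear terms give $(\mu+\lambda)|a_2|\le|B_1|\cos\beta$, while the quadratic terms give two identities
\[
(\mu+2\lambda)a_3+\frac{(\mu-1)(\mu+2\lambda)}{2}a_2^{2}=c_2,\qquad \frac{(\mu+3)(\mu+2\lambda)}{2}a_2^{2}-(\mu+2\lambda)a_3=d_2,
\]
with $|c_2|\le|B_1|\cos\beta$ and $|d_2|\le|B_1|\cos\beta$.

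Adding the two identities makes the $a_3$-terms cancel and the $a_2^{2}$-terms combine to $(\mu+1)(\mu+2\lambda)a_2^{2}=c_2+d_2$; the triangle inequality then gives $(1+\mu)(2\lambda+\mu)|a_2|^{2}\le 2|B_1|\cos\beta$, i.e.\ (\ref{bi-th2-b-a2}). Subtracting the two identities gives $2(\mu+2\lambda)(a_3-a_2^{2})=c_2-d_2$, so $a_3=a_2^{2}+\dfrac{c_2-d_2}{2(\mu+2\lambda)}$; substituting $a_2^{2}=\dfrac{c_2+d_2}{(\mu+1)(\mu+2\lambda)}$ expresses $a_3$ as a fixed linear combination of $c_2$ and $d_2$, and one final use of the triangle inequality (with $|c_2|,|d_2|\le|B_1|\cos\beta$) yields (\ref{bi-th2-b-a3}).

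The main obstacle is the expansion step: for arbitrary real $\mu\ge 0$ one must handle the binomial coefficients of $\left(f(z)/z\right)^{\mu}$ and $\left(f(z)/z\right)^{\mu-1}$ carefully so as to land on the $a_2^{2}$-coefficients $\frac{(\mu-1)(\mu+2\lambda)}{2}$ and $\frac{(\mu+3)(\mu+2\lambda)}{2}$; once those are in hand, everything that follows is elementary algebra together with two applications of the triangle inequality. A secondary point to record explicitly is the reduction to functions vanishing at the origin before Lemma~\ref{lem-pom-2} is invoked, since the convex dominant $H$ satisfies $H(0)=e^{i\beta}\neq 0$.
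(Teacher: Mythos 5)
Your argument is, step for step, the paper's own proof: you arrive at exactly the coefficient identities (\ref{th2-ceof-p1})--(\ref{th2-ceof-q2}) (your $a_2^2$-coefficients $\tfrac{(\mu-1)(2\lambda+\mu)}{2}$ and $\tfrac{(\mu+3)(2\lambda+\mu)}{2}$ match), you add them to isolate $(1+\mu)(2\lambda+\mu)a_2^2=c_2+d_2$ exactly as in (\ref{a2-square}), subtract them to get $2(2\lambda+\mu)(a_3-a_2^2)=c_2-d_2$ as in (\ref{th2-a3-cal-e1}), and you invoke Lemma \ref{lem-pom-2} in the same way. The only difference is cosmetic: you keep $\cos\beta$ inside the convex dominant $H=h\cos\beta+i\sin\beta$ and normalize by subtracting $H(0)$ before applying the lemma, whereas the paper first writes the left-hand side as $p(z)\cos\beta+i\sin\beta$ with $p\prec h$; both give $|c_2|,|d_2|\le|B_1|\cos\beta$, and your derivation of (\ref{bi-th2-b-a2}) is complete and correct.

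One caveat, which you inherit from the paper rather than introduce: the ``one final use of the triangle inequality'' on $a_3=\frac{c_2+d_2}{(1+\mu)(2\lambda+\mu)}+\frac{c_2-d_2}{2(2\lambda+\mu)}$ does not yield (\ref{bi-th2-b-a3}) for all $\mu\ge0$. Collecting the coefficients of $c_2$ and $d_2$, the triangle inequality gives at best $|a_3|\le\frac{(3+\mu)+|1-\mu|}{2(1+\mu)(2\lambda+\mu)}\,|B_1|\cos\beta$, which equals the stated bound $\frac{2|B_1|\cos\beta}{(1+\mu)(2\lambda+\mu)}$ only when $0\le\mu\le1$; for $\mu>1$ it gives the strictly weaker $|a_3|\le\frac{|B_1|\cos\beta}{2\lambda+\mu}$. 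So your last sentence, like the paper's ``we readily get'' after (\ref{th2-a3-cal-e2}), silently assumes $\mu\le1$; to be rigorous you should either restrict to that range or record the bound as the appropriate maximum of the two expressions.
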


\begin{proof}
It follows from (\ref{Defi-2-e1}) and (\ref{Defi-2-e2}) that there exists $%
p,q \in \mathcal{P}$ such that 
\begin{equation}  \label{bi-th2-pr-e1}
e^{i\beta}\left ( (1-\lambda)\left(\frac{f(z)}{z}\right)^{\mu}+\lambda
f^{\prime }(z)\left(\frac{f(z)}{z}\right)^{\mu-1} \right )= p(z)\cos\beta +
i \sin\beta
\end{equation}
and 
\begin{equation}  \label{bi-th2-pr-e2}
e^{i\beta}\left ( (1-\lambda)\left(\frac{g(w)}{w}\right)^{\mu}+\lambda
g^{\prime }(w)\left(\frac{g(w)}{w}\right)^{\mu-1} \right )= p(w)\cos\beta +
i \sin\beta,
\end{equation}
where $p(z)\prec h(z)$ and $q(w)\prec h(w)$ have the forms 
\begin{equation}  \label{Exp-p(z)}
p(z)=1+p_1z+p_2z^2+\dots,\,\, z \in \mathbb{U}
\end{equation}
and 
\begin{equation}  \label{Exp-q(w)}
q(z)=1+q_1w+q_2w^2+\dots,\,\, w \in \mathbb{U} .
\end{equation}
Equating coefficients in (\ref{bi-th2-pr-e1}) and (\ref{bi-th2-pr-e2}), we
get 
\begin{equation}  \label{th2-ceof-p1}
e^{i\beta}(\lambda+\mu)a_2 = p_1\cos\beta
\end{equation}
\begin{equation}  \label{th2-ceof-p2}
e^{i\beta}\left[\frac{a_2^2}{2}(\mu-1)+a_3\right](2\lambda+\mu) =
p_2\cos\beta
\end{equation}
\begin{equation}  \label{th2-ceof-q1}
-e^{i\beta}(\lambda+\mu)a_2 = q_1\cos\beta
\end{equation}
and 
\begin{equation}  \label{th2-ceof-q2}
e^{i\beta}\left[(\mu+3)\frac{a_2^2}{2}-a_3\right](2\lambda+\mu) =
q_2\cos\beta .
\end{equation}
From (\ref{th2-ceof-p1}) and (\ref{th2-ceof-q1}), we get 
\begin{equation}  \label{th2-pr-p1=q1}
p_1=-q_1
\end{equation}
and 
\begin{equation}  \label{th2-pr-p1=q1N}
2e^{i\beta}(\lambda+\mu)^2a_2^2 = (p_1^2+q_1^2)\cos^2\beta .
\end{equation}
Also, from (\ref{th2-ceof-p2}) and (\ref{th2-ceof-q2}), we obtain 
\begin{equation}  \label{a2-square}
a_2^2=\frac{e^{-i\beta}(p_2+q_2)\cos\beta}{(1+\mu)(2\lambda+\mu)}.
\end{equation}
Since $p,q\in h(\mathbb{U}),$ applying Lemma \ref{lem-pom-2}, we immediately
have 
\begin{equation}  \label{p-k-B-1}
|p_m|=\left|\frac{p^{(m)}(0)}{m!}\right|\leq|B_1|,\,\,m\in\mathbb{N},
\end{equation}
and 
\begin{equation}  \label{q-k-B-1}
|q_m|=\left|\frac{q^{(m)}(0)}{m!}\right|\leq|B_1|,\,\,m\in\mathbb{N}.
\end{equation}
Applying (\ref{p-k-B-1}), (\ref{q-k-B-1}) and Lemma \ref{lem-pom-2} for the
coefficients $p_1,$ $p_2,$ $q_1$ and $q_2,$ we readily get

\begin{equation*}
|a_2|\leq\sqrt{\frac{2|B_1|\cos\beta}{(1+\mu)(2\lambda+\mu)}}.
\end{equation*}
This gives the bound on $|a_2|$ as asserted in (\ref{bi-th2-b-a2}).

Next, in order to find the bound on $|a_3|$, by subtracting (\ref%
{th2-ceof-q2}) from (\ref{th2-ceof-p2}), we get 
\begin{equation}  \label{th2-a3-cal-e1}
2(a_3 - a_2^2)(2\lambda+\mu) = e^{-i\beta}(p_2-q_2)\cos\beta .
\end{equation}
It follows from (\ref{a2-square}) and (\ref{th2-a3-cal-e1}) that 
\begin{equation}  \label{th2-a3-cal-e2}
a_3 = \frac{e^{-i\beta}\cos\beta(p_2+q_2)}{(1+\mu)(2\lambda+\mu)}+\frac{%
e^{-i\beta}(p_2-q_2)\cos\beta}{2(2\lambda+\mu)}.
\end{equation}
Applying (\ref{p-k-B-1}), (\ref{q-k-B-1}) and Lemma \ref{lem-pom-2} once
again for the coefficients $p_1,$ $p_2,$ $q_1$ and $q_2,$ we readily get 
\begin{equation*}
|a_3| \leq \frac{2|B_1|cos\beta}{(2\lambda+\mu)(1+\mu)}.
\end{equation*}
This completes the proof of Theorem \ref{Bi-th2}.
\end{proof}

%%Corollaries-------------

\section{Corollaries and Consequences}

In view of Remark \ref{Bi-R-1}, if we set 
\begin{equation*}
h(z)=\frac{1+Az}{1+Bz},\quad -1\leq B < A \leq 1,\, z\in\mathbb{U}
\end{equation*}
and 
\begin{equation*}
h(z)=\frac{1+(1-2\alpha)z}{1-z},\,\, 0\leq \alpha <1,\,\, z\in \mathbb{U},
\end{equation*}
in Theorem \ref{Bi-th2}, we can readily deduce Corollaries \ref{Bi-th2-Cor1}
and \ref{Bi-th2-Cor2}, respectively, which we merely state here without
proof.

\begin{corollary}
\label{Bi-th2-Cor1} Let $f(z)$ given by (\ref{Int-e1}) be in the class $%
\mathcal{N}\mathcal{P}^{\mu,\lambda}_{\Sigma}(\beta,\frac{1+Az}{1+Bz}),$
then 
\begin{equation}  \label{bi-th2-Cor1-b-a2}
|a_2| \leq \sqrt{\frac{2(A-B)\cos\beta}{(1+\mu)(2\lambda+\mu)}}
\end{equation}
and 
\begin{equation}  \label{bi-th2-Cor1-b-a3}
|a_3| \leq \frac{2(A-B)cos\beta}{(2\lambda+\mu)(1+\mu)},
\end{equation}
where $\beta\in(-\pi/2,\pi/2),$ $\mu \geq 0$ and $\lambda \geq 1.$
\end{corollary}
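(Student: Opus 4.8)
The plan is to read off Corollary \ref{Bi-th2-Cor1} as an immediate specialization of Theorem \ref{Bi-th2}: the only things to do are to confirm that the Janowski-type function $h(z)=\frac{1+Az}{1+Bz}$ is admissible in the sense of the Definition and to identify the number $B_{1}$ attached to it.

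First I would verify admissibility. Since $A,B\in\mathbb{R}$ with $-1\le B<A\le 1$, we have $h(0)=1$ and $h(\bar z)=\overline{h(z)}$ at once, and the pole $z=-1/B$ of $h$ lies on or outside $\partial\mathbb{U}$, so $h$ is holomorphic in $\mathbb{U}$. As $h$ is a M\"obius transformation it carries the disc $\mathbb{U}$ onto a disc or a half-plane (the latter only when $B=-1$), hence onto a convex domain, so $h$ is convex univalent in $\mathbb{U}$; and $\Re h(z)>0$ in $\mathbb{U}$ is the classical positivity property of the Janowski class $P[A,B]$, which I would simply quote.

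Next I would expand $h$ about the origin,
\[
h(z)=\frac{1+Az}{1+Bz}=(1+Az)\sum_{k=0}^{\infty}(-B)^{k}z^{k}=1+(A-B)z+(B^{2}-AB)z^{2}+\cdots ,
\]
so that $B_{1}=A-B$, and since $B<A$ this gives $|B_{1}|=A-B$. Substituting $|B_{1}|=A-B$ into (\ref{bi-th2-b-a2}) and (\ref{bi-th2-b-a3}) of Theorem \ref{Bi-th2} produces exactly (\ref{bi-th2-Cor1-b-a2}) and (\ref{bi-th2-Cor1-b-a3}), which completes the argument. There is no real obstacle: the whole corollary reduces to the one-line computation $B_{1}=A-B$ together with the bookkeeping that $h$ satisfies the hypotheses of the Definition; the mildly delicate point is only the verification of convexity and of $\Re h(z)>0$ for the full parameter range (in particular the limiting case $B=-1$), and that is handled by the M\"obius-image observation above.
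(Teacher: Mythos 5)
Your proposal is correct and is exactly the route the paper intends: the authors state that the corollary is "readily deduced" from Theorem \ref{Bi-th2} by setting $h(z)=\frac{1+Az}{1+Bz}$, which amounts to the computation $B_{1}=A-B$ that you carry out. Your additional verification that this $h$ satisfies the admissibility hypotheses of the Definition (convexity, real coefficients, positive real part, including the half-plane case $B=-1$) is a welcome bit of care that the paper omits entirely.
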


\begin{corollary}
\label{Bi-th2-Cor2} Let $f(z)$ given by (\ref{Int-e1}) be in the class $%
\mathcal{N}\mathcal{P}^{\mu,\lambda}_{\Sigma}(\beta,\alpha),$ $0\leq \alpha
< 1,$ $\mu \geq 0$ and $\lambda \geq 1,$ then 
\begin{equation}  \label{bi-th2-Cor2-b-a2}
|a_2| \leq \sqrt{\frac{4(1-\alpha)\cos\beta}{(1+\mu)(2\lambda+\mu)}}
\end{equation}
and 
\begin{equation}  \label{bi-th2-Cor2-b-a3}
|a_3| \leq \frac{4(1-\alpha)cos\beta}{(2\lambda+\mu)(1+\mu)},
\end{equation}
where $\beta\in(-\pi/2,\pi/2).$
\end{corollary}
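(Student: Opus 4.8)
The plan is to deduce Corollary \ref{Bi-th2-Cor2} directly from Theorem \ref{Bi-th2} by specializing the admissible function $h$, so that no fresh coefficient computation is required. First I would check that
\[
h(z)=\frac{1+(1-2\alpha)z}{1-z},\qquad 0\le\alpha<1,
\]
fits the hypotheses imposed in the Definition: this $h$ is the M\"obius transformation mapping $\mathbb{U}$ conformally onto the half-plane $\{w:\Re w>\alpha\}$, a convex domain, hence $h$ is convex univalent in $\mathbb{U}$; moreover $h(0)=1$, the Taylor coefficients of $h$ at the origin are real so that $h(\bar z)=\overline{h(z)}$, and $\Re h(z)>\alpha\ge 0$. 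Thus Theorem \ref{Bi-th2} applies to this $h$, and by Remark \ref{Bi-R-5} the class $\mathcal{N}\mathcal{P}^{\mu,\lambda}_{\Sigma}(\beta,\alpha)$ is exactly $\mathcal{N}\mathcal{P}^{\mu,\lambda}_{\Sigma}(\beta,h)$ for this choice.

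Next I would compute the coefficient $B_1$ that enters the bounds (\ref{bi-th2-b-a2}) and (\ref{bi-th2-b-a3}). Expanding
\[
\frac{1+(1-2\alpha)z}{1-z}=\bigl(1+(1-2\alpha)z\bigr)\sum_{n\ge 0}z^{n}=1+2(1-\alpha)z+2(1-\alpha)z^{2}+\cdots,
\]
gives $B_1=2(1-\alpha)$, so $|B_1|=2(1-\alpha)$ since $0\le\alpha<1$. Substituting $|B_1|=2(1-\alpha)$ into (\ref{bi-th2-b-a2}) and (\ref{bi-th2-b-a3}) produces
\[
|a_2|\le\sqrt{\frac{4(1-\alpha)\cos\beta}{(1+\mu)(2\lambda+\mu)}},\qquad |a_3|\le\frac{4(1-\alpha)\cos\beta}{(2\lambda+\mu)(1+\mu)},
\]
which are precisely (\ref{bi-th2-Cor2-b-a2}) and (\ref{bi-th2-Cor2-b-a3}).

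Since the argument is just a substitution, there is no real obstacle. The only points deserving attention are verifying that the image half-plane $\{\Re w>\alpha\}$ is convex and lies in the right half-plane --- this is exactly what makes $h$ admissible and is why the corollary requires $0\le\alpha<1$ --- and reading $B_1=2(1-\alpha)$ off the geometric series correctly. If one preferred a self-contained argument, one could instead re-run the proof of Theorem \ref{Bi-th2} verbatim with this explicit $h$: equate coefficients in (\ref{bi-th2-pr-e1})--(\ref{bi-th2-pr-e2}), derive (\ref{a2-square}) and (\ref{th2-a3-cal-e2}), and apply Lemma \ref{lem-pom-2} with $|B_1|=2(1-\alpha)$; but this only reproduces Theorem \ref{Bi-th2}, so quoting it is the efficient route.
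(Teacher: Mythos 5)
Your proposal is correct and follows exactly the route the paper intends: the paper explicitly states that Corollary \ref{Bi-th2-Cor2} is obtained by setting $h(z)=\frac{1+(1-2\alpha)z}{1-z}$ in Theorem \ref{Bi-th2} and is "merely stated without proof." Your verification that this $h$ is admissible (convex univalent, $h(0)=1$, real coefficients, positive real part) and your computation $B_1=2(1-\alpha)$, substituted into (\ref{bi-th2-b-a2}) and (\ref{bi-th2-b-a3}), supply precisely the omitted details.
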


\begin{remark}
When $\beta=0$ the estimates of the coefficients $|a_2|$ and $|a_3|$ of the
Corollary \ref{Bi-th2-Cor2} are improvement of the estimates obtained in 
\cite[Theorem 3.1]{Caglar-Orhan}.
\end{remark}

\begin{corollary}
\label{Bi-th2-Cor3} Let $f(z)$ given by (\ref{Int-e1}) be in the class $%
\mathcal{N}\mathcal{P}^{\mu,1}_{\Sigma}(\beta,\alpha),$ $0\leq \alpha < 1$
and $\mu \geq 0,$ then 
\begin{equation}  \label{bi-th2-Cor3-b-a2}
|a_2| \leq \sqrt{\frac{4(1-\alpha)\cos\beta}{(1+\mu)(2+\mu)}}
\end{equation}
and 
\begin{equation}  \label{bi-th2-Cor3-b-a3}
|a_3| \leq \frac{4(1-\alpha)cos\beta}{(2+\mu)(1+\mu)},
\end{equation}
where $\beta\in(-\pi/2,\pi/2).$
\end{corollary}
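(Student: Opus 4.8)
The plan is to obtain Corollary \ref{Bi-th2-Cor3} as an immediate specialization of the results already established, so essentially no new work is required. The quickest route is to start from Corollary \ref{Bi-th2-Cor2}, which gives, for $f\in\mathcal{N}\mathcal{P}^{\mu,\lambda}_{\Sigma}(\beta,\alpha)$, the bounds $|a_2|\le\sqrt{4(1-\alpha)\cos\beta/[(1+\mu)(2\lambda+\mu)]}$ and $|a_3|\le 4(1-\alpha)\cos\beta/[(2\lambda+\mu)(1+\mu)]$, and then simply set $\lambda=1$. Under this substitution $2\lambda+\mu$ becomes $2+\mu$, and the two displayed estimates collapse exactly to \eqref{bi-th2-Cor3-b-a2} and \eqref{bi-th2-Cor3-b-a3}. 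The constraint $\lambda\ge 1$ in Corollary \ref{Bi-th2-Cor2} is met with equality, and the remaining hypotheses $0\le\alpha<1$, $\mu\ge 0$, $\beta\in(-\pi/2,\pi/2)$ carry over verbatim.

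Alternatively, one can go back one step further and invoke Theorem \ref{Bi-th2} directly with the choice $h(z)=\frac{1+(1-2\alpha)z}{1-z}$ (as recorded in Remark \ref{Bi-R-5}) together with $\lambda=1$. Here the only arithmetic needed is to identify the relevant coefficient $B_1$ of the convex function $h$: expanding $h(z)=1+2(1-\alpha)z+2(1-\alpha)z^2+\cdots$ gives $|B_1|=2(1-\alpha)$. Substituting $|B_1|=2(1-\alpha)$ and $\lambda=1$ into \eqref{bi-th2-b-a2} and \eqref{bi-th2-b-a3} reproduces the claimed inequalities, since $2|B_1|\cos\beta=4(1-\alpha)\cos\beta$ and $2\lambda+\mu=2+\mu$.

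There is no genuine obstacle here: the statement is a corollary in the literal sense, and the "proof" consists only of verifying that the specialization of parameters is admissible (it is, since $\lambda=1\ge 1$) and performing the trivial substitution. I would therefore present it in a single short paragraph, noting that the result follows from Corollary \ref{Bi-th2-Cor2} (equivalently Theorem \ref{Bi-th2}) upon taking $\lambda=1$, and I would not belabor the elementary algebra. If the authors prefer, this can even be stated without proof, in the same spirit as Corollaries \ref{Bi-th2-Cor1} and \ref{Bi-th2-Cor2}, which are "merely stated here without proof."
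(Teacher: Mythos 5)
Your proposal is correct and matches the paper's (implicit) derivation: the corollary is exactly Corollary \ref{Bi-th2-Cor2} with $\lambda=1$, equivalently Theorem \ref{Bi-th2} with $h(z)=\frac{1+(1-2\alpha)z}{1-z}$ so that $|B_1|=2(1-\alpha)$, and the paper likewise states it without further proof. Your computation of $B_1$ and the substitution $2\lambda+\mu=2+\mu$ are both accurate.
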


\begin{corollary}
\label{Bi-th2-Cor4} Let $f(z)$ given by (\ref{Int-e1}) be in the class $%
\mathcal{N}\mathcal{P}^{0,1}_{\Sigma}(\beta,\alpha),$ $0\leq \alpha < 1,$
then 
\begin{equation}  \label{bi-th2-Cor4-b-a2}
|a_2| \leq \sqrt{2(1-\alpha)\cos\beta}
\end{equation}
and 
\begin{equation}  \label{bi-th2-Cor4-b-a3}
|a_3| \leq 2(1-\alpha)cos\beta,
\end{equation}
where $\beta\in(-\pi/2,\pi/2).$
\end{corollary}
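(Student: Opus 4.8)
The plan is to obtain Corollary \ref{Bi-th2-Cor4} as a direct specialization: it is exactly the case $\mu = 0$ of Corollary \ref{Bi-th2-Cor3}, which in turn is the case $\lambda = 1$ of Corollary \ref{Bi-th2-Cor2}, i.e. the case $\mu = 0$, $\lambda = 1$ of Theorem \ref{Bi-th2} applied with $h(z) = \frac{1+(1-2\alpha)z}{1-z}$. No new ideas are required; the argument is a substitution into an estimate already proved.

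First I would confirm that this particular $h$ is admissible in the sense of the Definition, so that Theorem \ref{Bi-th2} applies and, with it, the class $\mathcal{N}\mathcal{P}^{0,1}_{\Sigma}(\beta,\alpha)$ of Remark \ref{Bi-R-7} coincides with $\mathcal{N}\mathcal{P}^{0,1}_{\Sigma}(\beta,h)$ for this $h$. The function $h(z) = \frac{1+(1-2\alpha)z}{1-z}$ maps $\mathbb{U}$ onto the half-plane $\{w : \Re w > \alpha\}$, so it is univalent and convex, $h(0) = 1$, $h(\bar z) = \overline{h(z)}$, and $\Re h(z) > \alpha \geq 0$ on $\mathbb{U}$; hence the hypotheses of the Definition and of Lemma \ref{lem-pom-2} hold, and the chain of coefficient identities in the proof of Theorem \ref{Bi-th2} is available.

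Next I would extract the relevant Taylor coefficient of $h$. Writing $h(z) = 1 + \sum_{n=1}^{\infty} B_n z^n$ and using $h(z) = \bigl(1+(1-2\alpha)z\bigr)\sum_{n=0}^{\infty} z^n$, I get $B_1 = 1 + (1-2\alpha) = 2(1-\alpha)$, and since $0 \leq \alpha < 1$ this gives $|B_1| = 2(1-\alpha)$.

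Finally I would substitute $|B_1| = 2(1-\alpha)$, $\mu = 0$ and $\lambda = 1$ into (\ref{bi-th2-b-a2}) and (\ref{bi-th2-b-a3}). The common denominator is $(1+\mu)(2\lambda+\mu) = 1 \cdot 2 = 2$, so $|a_2| \leq \sqrt{\frac{2\cdot 2(1-\alpha)\cos\beta}{2}} = \sqrt{2(1-\alpha)\cos\beta}$ and $|a_3| \leq \frac{2\cdot 2(1-\alpha)\cos\beta}{2} = 2(1-\alpha)\cos\beta$, which are precisely (\ref{bi-th2-Cor4-b-a2}) and (\ref{bi-th2-Cor4-b-a3}). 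The only step requiring any attention is the admissibility check in the second paragraph; everything after it is arithmetic, so I do not anticipate a genuine obstacle.
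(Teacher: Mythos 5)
Your proposal is correct and follows exactly the paper's route: Corollary \ref{Bi-th2-Cor4} is obtained by substituting $h(z)=\frac{1+(1-2\alpha)z}{1-z}$ (so $|B_1|=2(1-\alpha)$) together with $\mu=0$, $\lambda=1$ into Theorem \ref{Bi-th2}, which is precisely how the paper deduces it (stating it without proof). The admissibility check and the arithmetic are both right, so there is nothing to add.
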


\begin{remark}
Taking $\beta=0$ in Corollary \ref{Bi-th2-Cor4}, the estimate (\ref%
{bi-th2-Cor4-b-a2}) reduces to $|a_2|$ of \cite[Corollary 3.3]{Li-Wang} and (%
\ref{bi-th2-Cor4-b-a3}) is improvement of $|a_3|$ obtained in \cite[%
Corollary 3.3]{Li-Wang}.
\end{remark}

\begin{corollary}
\label{Bi-th2-Cor5} Let $f(z)$ given by (\ref{Int-e1}) be in the class $%
\mathcal{N}\mathcal{P}^{1,\lambda}_{\Sigma}(\beta,\alpha),$ $0\leq \alpha <
1 $ and $\lambda \geq 1,$ then 
\begin{equation}  \label{bi-th2-Cor5-b-a2}
|a_2| \leq \sqrt{\frac{2(1-\alpha)\cos\beta}{2\lambda+1}}
\end{equation}
and 
\begin{equation}  \label{bi-th2-Cor5-b-a3}
|a_3| \leq \frac{2(1-\alpha)cos\beta}{2\lambda+1},
\end{equation}
where $\beta\in(-\pi/2,\pi/2).$
\end{corollary}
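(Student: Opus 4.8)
The plan is to obtain this corollary as an immediate specialization of Theorem~\ref{Bi-th2} (equivalently, of Corollary~\ref{Bi-th2-Cor2}), so that no fresh estimation is needed. First I would invoke Remark~\ref{Bi-R-8}: the class $\mathcal{N}\mathcal{P}^{1,\lambda}_{\Sigma}(\beta,\alpha)$ is precisely the class $\mathcal{N}\mathcal{P}^{\mu,\lambda}_{\Sigma}(\beta,h)$ corresponding to the choices $\mu=1$ and $h(z)=\frac{1+(1-2\alpha)z}{1-z}$ with $0\le\alpha<1$; this $h$ is indeed convex univalent on $\mathbb{U}$, normalized by $h(0)=1$, symmetric in the required sense, and satisfies $\Re(h(z))>0$, so Theorem~\ref{Bi-th2} applies.

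Next I would identify the relevant Taylor coefficient $B_1$ of $h$. Expanding $h(z)=\bigl(1+(1-2\alpha)z\bigr)\sum_{n\ge 0}z^{n}=1+2(1-\alpha)z+2(1-\alpha)z^{2}+\cdots$, one reads off $B_1=2(1-\alpha)$, and since $0\le\alpha<1$ we have $|B_1|=2(1-\alpha)>0$. (This is also consistent with Remark~\ref{Bi-R-1}, where $h(z)=\frac{1+Az}{1+Bz}$ yields $|B_1|=A-B$, upon setting $A=1-2\alpha$, $B=-1$.)

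Then I would substitute $\mu=1$ and $|B_1|=2(1-\alpha)$ into the bounds (\ref{bi-th2-b-a2}) and (\ref{bi-th2-b-a3}) of Theorem~\ref{Bi-th2}. Since $(1+\mu)(2\lambda+\mu)=2(2\lambda+1)$ when $\mu=1$, the estimate for $|a_2|$ becomes $|a_2|\le\sqrt{\frac{2\cdot 2(1-\alpha)\cos\beta}{2(2\lambda+1)}}=\sqrt{\frac{2(1-\alpha)\cos\beta}{2\lambda+1}}$, and the estimate for $|a_3|$ becomes $|a_3|\le\frac{2\cdot 2(1-\alpha)\cos\beta}{(2\lambda+1)\cdot 2}=\frac{2(1-\alpha)\cos\beta}{2\lambda+1}$, which are exactly (\ref{bi-th2-Cor5-b-a2}) and (\ref{bi-th2-Cor5-b-a3}). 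An equivalent route is to put $\mu=1$ directly in Corollary~\ref{Bi-th2-Cor2} and cancel the common factor $2$ from numerator and denominator in both bounds.

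There is essentially no obstacle here: the whole argument is a substitution followed by elementary simplification. The only points needing a little care are computing $B_1$ correctly from the partial-fraction expansion of $h$, and carrying out the cancellation of the factor $1+\mu=2$ consistently both inside the square root for $|a_2|$ and in the denominator for $|a_3|$.
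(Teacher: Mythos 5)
Your proposal is correct and follows exactly the route the paper intends: the corollary is obtained by specializing Theorem~\ref{Bi-th2} (equivalently Corollary~\ref{Bi-th2-Cor2}) to $\mu=1$ and $h(z)=\frac{1+(1-2\alpha)z}{1-z}$, with $|B_1|=2(1-\alpha)$, and cancelling the factor $1+\mu=2$. The paper states this corollary without proof as just such a substitution, and your computation of $B_1$ and the resulting simplifications are accurate.
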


\begin{remark}
Taking $\beta=0$ in Corollary \ref{Bi-th2-Cor5}, the inequality (\ref%
{bi-th2-Cor5-b-a3}) improves the estimate of $|a_3|$ in \cite[Theorem 3.2]%
{BAF-MKA}.
\end{remark}

\begin{corollary}
\label{Bi-th2-Cor6} Let $f(z)$ given by (\ref{Int-e1}) be in the class $%
\mathcal{N}\mathcal{P}^{1,1}_{\Sigma}(\beta,\alpha),$ $0\leq \alpha <1,$
then 
\begin{equation}  \label{bi-th2-Cor6-b-a2}
|a_2| \leq \sqrt{\frac{2(1-\alpha)\cos\beta}{3}}
\end{equation}
and 
\begin{equation}  \label{bi-th2-Cor6-b-a3}
|a_3| \leq \frac{2(1-\alpha)cos\beta}{3},
\end{equation}
where $\beta\in(-\pi/2,\pi/2).$
\end{corollary}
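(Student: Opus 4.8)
The plan is to obtain Corollary \ref{Bi-th2-Cor6} as an immediate specialization of Theorem \ref{Bi-th2}, so that no fresh estimation is needed. First I would record the only datum about the extremal function that Theorem \ref{Bi-th2} uses, namely its first Taylor coefficient $B_{1}$. By Remark \ref{Bi-R-1}, taking $h(z)=\frac{1+(1-2\alpha)z}{1-z}$ with $0\le\alpha<1$ places $f$ in the subclass $\mathcal{N}\mathcal{P}^{\mu,\lambda}_{\Sigma}(\beta,\alpha)$, and expanding
\begin{equation*}
\frac{1+(1-2\alpha)z}{1-z}=\bigl(1+(1-2\alpha)z\bigr)\sum_{n=0}^{\infty}z^{n}=1+2(1-\alpha)z+2(1-\alpha)z^{2}+\cdots
\end{equation*}
gives $B_{1}=2(1-\alpha)$, hence $|B_{1}|=2(1-\alpha)$ since $0\le\alpha<1$.

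Next I would put $\mu=1$ and $\lambda=1$ — both admissible under the hypotheses $\mu\ge0$, $\lambda\ge1$ — and substitute $|B_{1}|=2(1-\alpha)$ into the bounds (\ref{bi-th2-b-a2}) and (\ref{bi-th2-b-a3}). Since $(1+\mu)(2\lambda+\mu)=2\cdot3=6$ here, this produces
\begin{equation*}
|a_{2}|\le\sqrt{\frac{2|B_{1}|\cos\beta}{(1+\mu)(2\lambda+\mu)}}=\sqrt{\frac{4(1-\alpha)\cos\beta}{6}}=\sqrt{\frac{2(1-\alpha)\cos\beta}{3}}
\end{equation*}
and
\begin{equation*}
|a_{3}|\le\frac{2|B_{1}|\cos\beta}{(2\lambda+\mu)(1+\mu)}=\frac{4(1-\alpha)\cos\beta}{6}=\frac{2(1-\alpha)\cos\beta}{3},
\end{equation*}
which are exactly (\ref{bi-th2-Cor6-b-a2}) and (\ref{bi-th2-Cor6-b-a3}). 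Equivalently one could just set $\mu=\lambda=1$ in Corollary \ref{Bi-th2-Cor2}, $\mu=1$ in Corollary \ref{Bi-th2-Cor3}, or $\lambda=1$ in Corollary \ref{Bi-th2-Cor5}.

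For completeness I would also run the argument directly in this special case, where it is especially transparent. By Remark \ref{Bi-R-8a}, for $\mu=\lambda=1$ the defining subordinations reduce to $\Re\bigl(e^{i\beta}f'(z)\bigr)>\alpha\cos\beta$ and $\Re\bigl(e^{i\beta}g'(w)\bigr)>\alpha\cos\beta$, so there exist $p,q\prec h$ with $e^{i\beta}f'(z)=p(z)\cos\beta+i\sin\beta$ and $e^{i\beta}g'(w)=q(w)\cos\beta+i\sin\beta$. Comparing coefficients of $z$ and $z^{2}$ yields $2e^{i\beta}a_{2}=p_{1}\cos\beta$ and $3e^{i\beta}a_{3}=p_{2}\cos\beta$ (no $a_{2}^{2}$ term survives, since $(f(z)/z)^{\mu-1}\equiv1$ when $\mu=1$), together with $-2e^{i\beta}a_{2}=q_{1}\cos\beta$ and $3e^{i\beta}(2a_{2}^{2}-a_{3})=q_{2}\cos\beta$; adding the two $z^{2}$-relations gives $a_{2}^{2}=\frac{e^{-i\beta}(p_{2}+q_{2})\cos\beta}{6}$, while the first $z^{2}$-relation already gives $a_{3}=\frac{e^{-i\beta}p_{2}\cos\beta}{3}$. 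Since Lemma \ref{lem-pom-2} (applied to $p-1$ and $q-1$ against the convex $h-1$) gives $|p_{m}|,|q_{m}|\le|B_{1}|=2(1-\alpha)$, the first identity yields $|a_{2}|^{2}\le\frac{2(1-\alpha)\cos\beta}{3}$ and the second yields $|a_{3}|\le\frac{2(1-\alpha)\cos\beta}{3}$. There is no genuine obstacle here; the only points to watch are the identification $B_{1}=2(1-\alpha)$ for the chosen $h$ and the arithmetic $(1+\mu)(2\lambda+\mu)=6$ at $\mu=\lambda=1$. (In the general proof of Theorem \ref{Bi-th2} one must bound $|p_{2}+q_{2}|$ and $|p_{2}-q_{2}|$ separately via (\ref{th2-a3-cal-e2}); the present case is cleaner because $\mu=1$ makes those two contributions coalesce into the single term $\frac{e^{-i\beta}p_{2}\cos\beta}{3}$.)
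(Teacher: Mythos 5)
Your proposal is correct and follows exactly the route the paper intends: the corollary is the specialization $\mu=\lambda=1$, $h(z)=\frac{1+(1-2\alpha)z}{1-z}$ (so $B_1=2(1-\alpha)$) of Theorem \ref{Bi-th2}, and your arithmetic $(1+\mu)(2\lambda+\mu)=6$ reproduces (\ref{bi-th2-Cor6-b-a2}) and (\ref{bi-th2-Cor6-b-a3}). Your supplementary direct verification via Remark \ref{Bi-R-8a} is also sound and consistent with the paper's coefficient identities (\ref{th2-ceof-p1})--(\ref{th2-ceof-q2}) at $\mu=\lambda=1$.
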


\begin{remark}
For $\beta=0$ the inequality (\ref{bi-th2-Cor6-b-a3}) improves the estimate $%
|a_3|$ of \cite[Theorem 2]{HMS-AKM-PG}.
\end{remark}

%------------------------

\end{document}